\newtheorem{theorem}{\bf Theorem}[section]
\begin{document}
\title[On locally finite orthomodular lattices]{On locally finite orthomodular lattices}
\author[Dominika Bure\v{s}ov\'{a} \and Pavel Pt\'{a}k]{Dominika Bure\v{s}ov\'{a}* \and Pavel Pt\'{a}k**}

\newcommand{\acr}{\newline\indent}
\newcommand{\N}{\mathbb{N}}
\newcommand{\R}{\mathbb{R}}

\address{\llap{*\,}
Czech Technical University in Prague\acr
Faculty of Electrical Engineering\acr
Prague\acr
Czech Republic}
\email{buresdo2@fel.cvut.cz}
\address{\llap{**\,}Czech Technical University in Prague\acr
Faculty of Electrical Engineering\acr
Department of Mathematics\acr
Prague\acr
Czech Republic}
\email{ptak@math.feld.cvut.cz}

\thanks{This work was supported by FWF: Project I 4579-N and GA\v{C}R: Project 20-09869L} 
\subjclass[2010]{06C15; 03G05; 03G12}
\keywords{Boolean algebra, orthomodular lattice, local finiteness, state}

\begin{abstract}
    Let us denote by $\mathcal{LF}$ the class of all orthomodular		 lattices (OMLs) that are locally finite (i.e., $L \in \mathcal{LF}$ provided each finite subset of $L$ generates in $L$ a finite subOML). We first show in this note how one can obtain new locally finite OMLs from the initial ones and enlarge thus the class  $\mathcal{LF}$. We find $\mathcal{LF}$ considerably large though, obviously, not all OMLs belong to $\mathcal{LF}$. We then study states on the OMLs of $\mathcal{LF}$. We show that local finiteness may to a certain extent make up for distributivity. We for instance show that if  $L \in \mathcal{LF}$ and if for any finite subOML $K$ there is a state $s: K\to \enskip[0,1]$  on $K$, then there is a state on the entire $L$.
We also consider further algebraic and state properties of $\mathcal{LF}$ relevant to quantum logic theory.
\end{abstract}

\maketitle

\section{Notions and results}

Let $L = (P,\,\leq,\, ',\, 0,\, 1)$ be an orthomodular lattice (abbr. OML - see e.g.~\cite{Kalmbach} for the definition and basic properties of OMLs). As known, an OML does not have to be locally finite. For instance, the free OML over three generators is not locally finite (this OML is infinite, see~\cite{Harding}). Also, the lattice $L(H)$ of the projectors in a Hilbert space $\mathcal{H}$ is not locally finite (see e.g.~\cite{Kalmbach}, where one can also learn basic properties of OMLs). Let us denote by $\mathcal{LF}$  the class of all locally finite OMLs. The basic information of $\mathcal{LF}$  is listed below.

\begin{theorem}\label{Theorem 1.1}\mbox{}\\
	(i) Each finite OML and each Boolean algebra belongs to $\mathcal{LF}$.\\
    (ii) $\mathcal{LF}$ is closed under the formation of finite cartesian products. A consequence: If $L\in \mathcal{LF}$ and if we denote by $cf(L)$ the difference between the number of generators of $L$ and the cardinality of $L$, then $cf(L)$ can in general be greater than any natural number even for a finite $L$. ($\mathcal{LF}$ is not closed under the formation of infinite cartesian products as we shall comment on later.)\\
    (iii) $\mathcal{LF}$  is closed under the formation of arbitrary horizontal sums. A consequence: There is an infinite non-Boolean set-representable locally finite OML.\\
    (iv) $\mathcal{LF}$  is closed under the formation of epimorphic images. A consequence: The class $\mathcal{LF}$ for the objects and the epimorphic homomorphisms for the morphisms forms a category.\\
    (v) Let $L \in \mathcal{LF}$ and let $B$ be a Boolean algebra. Then $L$ can be OML-embedded into an OML $K$, $K \in \mathcal{LF}$ such that the centre $C(K)$ of $K$ is equal to $B$.
\end{theorem}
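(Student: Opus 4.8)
For part (v), the plan is to realise $K$ as the OML of locally constant functions from the Stone space of $B$ into $L$, after a preliminary step that kills the centre of $L$. First I would replace $L$ by the horizontal sum $L\oplus L$ of two copies of itself. By Theorem~\ref{Theorem 1.1}(iii) this again lies in $\mathcal{LF}$, it contains $L$ as a subOML (either summand), and its centre is $\{0,1\}$: in a horizontal sum two elements drawn from different nontrivial summands never commute, so a central element of $L\oplus L$ must be $0$ or $1$. Hence from the outset we may assume $C(L)=\{0,1\}$, and it suffices to embed such an $L$ into some $K\in\mathcal{LF}$ with $C(K)\cong B$.

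Next, let $X$ be the Stone space of $B$ --- compact, Hausdorff, totally disconnected, with $B$ isomorphic to the Boolean algebra of clopen subsets of $X$ --- and put
\[
  K \;=\; \{\, f\colon X\to L \ \mid\ f \text{ is locally constant}\,\},
\]
with lattice operations, orthocomplementation and the constants $0,1$ all defined pointwise. Since orthomodular lattices form a variety, $K$ is closed under these operations and is a subOML of the power $L^{X}$; in particular $K$ is an OML, and $L$ is OML-embedded into $K$ via the constant functions. It then remains to verify that $C(K)\cong B$ and that $K\in\mathcal{LF}$.

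For the centre, I would observe that $f\in K$ commutes pointwise with every constant function precisely when $f(x)\in C(L)=\{0,1\}$ for all $x\in X$, and that any such $f$ automatically commutes with all of $K$; hence $C(K)$ is exactly the algebra of locally constant $\{0,1\}$-valued functions on $X$, i.e.\ the clopen algebra of $X$, which is isomorphic to $B$. For local finiteness, given a finite $F=\{f_1,\dots,f_n\}\subseteq K$, compactness of $X$ lets me choose a single finite clopen partition $X=U_1\sqcup\dots\sqcup U_m$ on each block of which every $f_i$ is constant, say $f_i|_{U_j}\equiv a_{ij}$. Every element of the subOML generated by $F$ is then again constant on each $U_j$, with value in the subOML $L_0$ of $L$ generated by the finitely many elements $a_{ij}$; since $L\in\mathcal{LF}$, the OML $L_0$ is finite, so the subOML generated by $F$ embeds into the finite OML $L_0^{\,m}$ and is itself finite. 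Therefore $K\in\mathcal{LF}$.

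I expect the centre computation to be the one genuinely delicate point, and it is exactly this that dictates the preliminary horizontal-sum reduction: the function construction over a Stone space $X$ always produces $C(K)$ equal to the locally constant $C(L)$-valued functions, which coincides with the clopen algebra of $X$ only when $C(L)$ is already trivial. Everything else is routine once one uses that ``locally constant on a compact Stone space'' means ``constant on each block of some finite clopen partition'', which simultaneously trivialises the centre computation and reduces local finiteness of $K$ to that of $L$.
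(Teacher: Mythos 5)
Your proposal addresses only part (v), but that is the only part for which the paper gives a substantive argument, and your argument for (v) is correct and essentially the paper's own: first trivialise the centre by a horizontal sum (the paper uses $Hor(L,\,MO_2)$ where you use $Hor(L,\,L)$ --- immaterial), then form the Boolean sum with $B$, which is exactly your OML of locally constant functions on the Stone space of $B$, since compactness identifies ``locally constant'' with ``constant on the blocks of a finite clopen partition.'' You additionally spell out the centre computation and the local-finiteness verification that the paper dismisses as ``easily seen,'' and both are correct; parts (i)--(iv) are left unaddressed, though the paper itself treats them as near-immediate.
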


\begin{proof}
    The statements (i) and (ii) are obvious, a finitely generated Boolean algebra possesses finitely many atoms.
    
\    As for (iii), recall (see~\cite{Kalmbach}) that if $L_\alpha ,\,\alpha \in I$, is a collection of OMLs, then by the horizontal sum $Hor(L_\alpha,\,\alpha \in I)$ one means the OML obtained by identifying  zeros and ones of all $L_\alpha ,\,\alpha \in I$. Obviously, if $L_\alpha \in \mathcal{LF}$ for any $\alpha \in I$, then $Hor(L_\alpha,\, \alpha \in I) \in \mathcal{LF}$, too.

    Further, verifying (iv), let $K \in \mathcal{LF}$ and $e : K \to L$ be an epimorphism in OMLs. Let \{$a_i,\, i \leq n$\} $\subseteq L$.
    Take  $b_i \in K$ with $e(b_i) = a_i$. Then \{$b_i ,\, i \leq n $\}  generates a finite subOML of $K$, some $\tilde{K}$, and so $e(\tilde{K})$ is a finite subOML of $L$. Since $e(\tilde{K})$ contains \{$a_i,\, i \leq n$\}, property (iv) follows.
    
Finally, approaching (v), recall that the centre $C(K)$ means the set of all ``absolutely compatible" elements of $K$ (see~\cite{Kalmbach}). Let us first take the horizontal sum $Hor(L,\, MO_2)$, where $MO_2 = \{0,\, 1,\, a,\, a',\, b,\, b'\}$. Then the centre $C(Hor(L,\, MO_2))$ is trivial, $C(Hor(L,\, MO_2)) = \{0,\, 1\}$, and, obviously, $L$ can be  embedded in $Hor(L,\, MO_2)$. To complete the argument, let us form the so-called Boolean sum of $Hor(L,\, MO_2)$ and $B$ (see e.g.~\cite{Ptak1}). Recall the construction of Boolean sum, the rest is then easily seen. Let us view $B$ as a collection $\mathcal{B}$ of subsets of a set, $B = (D,\,\mathcal{B})$ (the Stone Theorem). Let $\mathcal{P}$ be the collection of all partitions of $B$ (by a partition of $B$ we mean a family $P= \{A_i| A_i \in B,\, i \leq n$\} of disjoint non-empty sets $A_i$ with $\bigcup\limits_{i\leq n}A_i = D$).\\
If $L$ is an OML and $B= (D,\,\mathcal{B})$ is a Boolean algebra, Boolean sum of $L$ and  $B$ is the set of all functions $f :  D \to L$ with the property that there is a partition  \{$A_i,\, i \leq n$\} of $B$ such that $f$ is a constant on any $A_i ,\, i \leq n$ (the ``B step functions" on $D$ with values in $L$). Obviously, Boolean sum of $Hor(L,\, MO_2)$ and $B$ satisfies property (v).
\end{proof}

Let us pass to the considerations concerning states. Let us recall that
$s : L\to$  $[0,1]$ (resp. $s: L \to $ \{$0,1$\}) is called a state on $L$ (resp. a two-valued state on $L$), if\\
(i) $s(1) = 1$, and\\
(ii) $s(a\cup b) = s(a)+s(b)$ provided $a \leq b'$.\acr

Denote by $\mathcal{S}(L)$ (resp. $\mathcal{S}_2 (L)$) the set of all states on $L$ (resp. the set of all two-valued states on $L$). The characterization of the state space of an OML has been obtained in~\cite{Schultz} (for the interplay of $\mathcal{S}(L)$ with algebraic properties of $L$ implicitely related to this paper, see~\cite{NavaraPtakRogalewicz}). A peculiar circumstance is that $\mathcal{S}(L)$ may be empty (see~\cite{Greechie}) that disqualifies certain OMLs for an interpretation as quantum logics (see~\cite{PtakPulmannova} and~\cite{DvurcenskijPulmannova}). For a locally finite $L$, if this phenomenon occurs (i.e, if $\mathcal{S}(L) = \emptyset$), it has to occur for a finite subOML of $L$. This is shown in our next result.
(It may be noted that in the ``hidden variable" interpretation (see~\cite{Gudder}), if there is no hidden variable on $L$ (i.e., if $\mathcal{S}_2(L) = \emptyset$), then there is no hidden variable on a certain finite subOML of $L$).

\begin{theorem}\label{Theorem 1.2}
Let $L \in \mathcal{LF}$.
(i) If $\mathcal{S}(K) \neq \emptyset$ (resp. $\mathcal{S}_2(K) \neq \emptyset$) for any finite subOML $K$ of $L$, then $\mathcal{S}(L) \neq \emptyset$  (resp. $\mathcal{S}_2(L) \neq \emptyset$).\\
(ii) (A compactness theorem)
Let $P \subseteq L$ and let $q \in L$. Suppose that for any subOML $K$ of $L$ that is generated by a finite subset $F$ of $P$ there is a state $\tilde{s} \in \mathcal{S}_2(L)$ with $\tilde{s}(q)=1$ and $\tilde{s}(k)=0$ for any $k \in F$.
Then there is a state $s \in \mathcal{S}_2(L)$ with $s(q)=1$ and $s(p)=0$ for any $p \in P$. 
\end{theorem}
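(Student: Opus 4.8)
The plan is to prove both parts by a compactness argument, regarding real-valued (resp. two-valued) functions on $L$ as points of a Tychonoff cube.

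For (i), equip $[0,1]^L$ (resp. $\{0,1\}^L$) with the product topology, so that it is compact. For each finite subOML $K$ of $L$ put
\[
U_K = \{\, f : f(1) = 1 \text{ and } f(a \cup b) = f(a) + f(b) \text{ whenever } a,b \in K,\ a \leq b' \,\}.
\]
Each $U_K$ is closed, being cut out by $f(1)=1$ together with finitely many equalities between continuous coordinate functions. First one checks that $\mathcal{S}(L) = \bigcap_K U_K$: the inclusion $\subseteq$ is immediate, and for $\supseteq$, if $f$ lies in every $U_K$ and $a \leq b'$ in $L$, then the subOML $\langle a,b\rangle$ generated by $\{a,b\}$ is finite (since $L\in\mathcal{LF}$) and contains $a\cup b$, so the required additivity holds; also $f(1)=1$ as $1\in K$ for any $K$. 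Next, the family $\{U_K\}$ has the finite intersection property: given finite subOMLs $K_1,\dots,K_n$, the subOML $K$ generated by $K_1\cup\cdots\cup K_n$ is finitely generated, hence finite, and $U_K \subseteq \bigcap_i U_{K_i}$; moreover $U_K \neq \emptyset$, because by hypothesis there is a state $s_K$ on $K$, and the function $f$ agreeing with $s_K$ on $K$ and defined arbitrarily on $L\setminus K$ lies in $U_K$ (the defining identities only involve elements of $K$). By compactness $\bigcap_K U_K = \mathcal{S}(L)\neq\emptyset$. The two-valued case is word for word the same, carried out in the compact space $\{0,1\}^L$ and using $\mathcal{S}_2(K)\neq\emptyset$.

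For (ii), the hypothesis already furnishes states on all of $L$, so this part is pure compactness (local finiteness is not actually needed here). The set $\mathcal{S}_2(L)$ is closed in $\{0,1\}^L$, being defined by $s(1)=1$ together with the conditions $s(a\cup b)=s(a)+s(b)$ for $a\leq b'$, each depending on finitely many coordinates; hence $\mathcal{S}_2(L)$ is compact. Set $W = \{ s\in\mathcal{S}_2(L) : s(q)=1\}$ and, for $p\in P$, $W_p = \{ s\in W : s(p)=0\}$; these are closed subsets of $\mathcal{S}_2(L)$. For a finite $F=\{p_1,\dots,p_n\}\subseteq P$ the hypothesis, applied to the subOML generated by $F$, yields $\tilde s\in\mathcal{S}_2(L)$ with $\tilde s(q)=1$ and $\tilde s(p_i)=0$, i.e. $\tilde s\in W_{p_1}\cap\cdots\cap W_{p_n}$ (taking $F=\emptyset$ gives $W\neq\emptyset$). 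Thus $\{W_p : p\in P\}$ has the finite intersection property, so by compactness $\bigcap_{p\in P} W_p\neq\emptyset$, and any member of this intersection is the desired state $s$.

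The only genuinely delicate point is in (i): one must observe that ``$f$ is a state'' is a conjunction of constraints each involving only finitely many elements of $L$, and that local finiteness traps every such constraint inside a finite subOML, on which a state exists by assumption and can be extended arbitrarily. Once this is recognised, directedness of the poset of finite subOMLs gives the finite intersection property and Tychonoff's theorem finishes the proof. Part (ii) presents no real obstacle beyond verifying that $\mathcal{S}_2(L)$ is closed in the cube.
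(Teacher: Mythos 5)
Your proposal is correct and follows essentially the same route as the paper: Tychonoff compactness of the cube, closed sets of ``functions that are states on $K$'' indexed by finite subOMLs, directedness of that index set via local finiteness, and the finite intersection property. You are somewhat more explicit than the paper on two points the paper leaves implicit --- why an element of the intersection is additive on every orthogonal pair of $L$, and why each $U_K$ is nonempty via arbitrary extension --- and your observation that part (ii) does not actually use local finiteness is accurate.
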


\begin{proof}
(i) By a standard application of Tychonoff's theorem on the product of compact topological spaces, the set $[0,1]^L$ (resp.\{$0,1$\}$^L$) of all functions $f: L \to$  $[0,1]$ (resp. all functions $L \to $ \{$0,1$\}) is compact in the topology of pointwise convergence. Let us consider, for each finite subOML $K$ of $L$, the set of all functions $g:L \to$ $ [0,1]$ that are states on $K$ when restricted to $K$. Denote this set of functions by $\mathcal{T}(K)$. Obviously, each $\mathcal{T}(K)$ is compact, too. Since the family of all finite subOMLs of $L$ is directed when ordered by inclusion, we see that \{$\mathcal{T}(K),\, K $ finite subOML of $L$\} is a centered family of closed subsets of $[0,1]^L$. By our assumption, each $\mathcal{T}(K)$ is non-void. If $\mathcal{K}$ denotes the set of all finite subOMLs of $L$, then the intersection $\bigcap\limits_{K \in \mathcal{K}}\mathcal{T}(K)$ is non-void (the compactness of $[0,1]^L$ applies).
Let $ s \in \bigcap\limits_{K \in \mathcal{K}} \mathcal{T}(K)$. Then $s \in \mathcal{S}(L)$ and this completes the proof.
\\
(ii) Suppose that $F$ is a finite subset of $P$ and suppose that $K$ is generated by $F$. Let us set $\mathcal{V}(K)$ for all states $\mathcal{S}_2(L)$ with $f(q) = 1$ and $f(k) = 0$ for any $k \in F$. Then \{$\mathcal{V}(K)$, $K$ generated by a finite subset of $P$\} is a centered family.
By applying the previous approach of ``let the compactness do it", we obtain a state $s \in \mathcal{S}_2(L)$ with the required properties.

\end{proof}

As expected, the local finiteness is essential in the above results.

\begin{theorem}\label{Theorem 1.3}
Let $L(\R^3)$ be the OML of all projections (linear subspaces) of $\R^3$. Then $\mathcal{S}_2(L(\R^3))$ = $\emptyset$ but each finite subOML $K$ of $L(\R^3)$ is set-representable (and therefore $K$ possesses an order determining set of two-valued states).
\end{theorem}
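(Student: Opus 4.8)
The plan is to treat the two assertions separately: the emptiness of $\mathcal{S}_2(L(\R^3))$ is the classical ``no two-valued states'' phenomenon, while the set-representability of the finite subOMLs follows from the fact that such subOMLs are extremely constrained.

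For the first assertion, suppose $s\in\mathcal{S}_2(L(\R^3))$. Restricting $s$ to the one-dimensional subspaces $\langle v\rangle$ gives an even function $v\mapsto s(\langle v\rangle)$ on the unit sphere with $s(\langle v_1\rangle)+s(\langle v_2\rangle)+s(\langle v_3\rangle)=1$ for every orthonormal triple $v_1,v_2,v_3$. I would eliminate this via Gleason's theorem: every state on $L(\R^3)$ is of the form $A\mapsto\operatorname{tr}(\rho P_A)$ for a density operator $\rho$, so on lines it coincides with the continuous map $v\mapsto\langle v,\rho v\rangle$ on the connected sphere; a continuous $\{0,1\}$-valued function is constant, and taking traces the constant must be $\tfrac13$ --- impossible. (One may instead quote a finite Kochen--Specker configuration of lines; it is worth remarking that such a configuration generates an \emph{infinite} subOML of $L(\R^3)$, which is exactly why this causes no conflict with the second assertion.)

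For the second assertion, the key point is that $L(\R^3)$, being the subspace lattice of a finite-dimensional space, is \emph{modular}; hence every subOML $K$ of $L(\R^3)$ is a modular OML, and, every chain in $L(\R^3)$ having length at most $3$, $K$ has height at most $3$. I would then show that such a $K$ is Boolean, or isomorphic to some $MO_n$, or isomorphic to $\mathbf 2\times MO_n$. Assume $K$ is not Boolean. Its atoms are one- or two-dimensional subspaces; one checks that $K$ has at most one two-dimensional atom (two distinct ones would have a nonzero meet, impossible below an atom) and that if it has one then $K\cong 2^2$ --- so all atoms of $K$ are lines, and, $K$ not being Boolean, some two of them, $\ell_1$ and $\ell_2$, are not orthogonal. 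Set $\pi=\ell_1\vee\ell_2$. For any further atom $\ell_3$, the subOML generated by $\{\ell_1,\ell_2,\ell_3\}$ is contained in $K$, hence finite; here one uses the key lemma that three lines in $\R^3$ generate a finite subOML of $L(\R^3)$ only when they are coplanar or one of them is orthogonal to the other two. Since $\ell_1\not\perp\ell_2$, this forces $\ell_3\subseteq\pi$ or $\ell_3=\pi^{\perp}$. It follows that $\pi$ and $\pi^{\perp}$ are central in $K$, so $K\cong[0,\pi]\times[0,\pi^{\perp}]$ with $[0,\pi^{\perp}]\cong\mathbf 2$ and $[0,\pi]$ a height-two OML; thus $K\cong\mathbf 2\times MO_n$.

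It remains to observe that $2^k$, $MO_n$ and $\mathbf 2\times MO_n$ are set-representable: a finite Boolean algebra embeds into a power-set algebra by Stone's theorem; $MO_n$ has an order-determining family of $2^n$ two-valued states, obtained by choosing one atom from each orthocomplementary pair; and a finite product of set-representable OMLs is set-representable (combine the underlying sets). Hence every finite subOML $K$ is set-representable and therefore, as noted in the statement, carries an order-determining set of two-valued states. The hard part is the key lemma on triples of lines --- namely that a ``generic'' triple generates an \emph{infinite} subOML of $L(\R^3)$; I would prove this either by exhibiting an explicit infinite orbit of lines under the lattice operations, or, conceptually, by noting that such a configuration furnishes a projective frame and that the polarity then allows a von Neumann-type coordinatization reconstructing an infinite subfield of $\R$ inside the generated subOML. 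Alternatively, the whole structural step can be replaced by quoting the classification of finite modular OMLs as finite products of Boolean algebras and copies of $MO_n$.
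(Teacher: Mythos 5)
Your proposal is correct in substance but takes a genuinely different route from the paper's. For $\mathcal{S}_2(L(\R^3))=\emptyset$ the paper simply cites Kochen--Specker, whereas you invoke Gleason's theorem; both are valid (in finite dimension finitely additive states fall under Gleason), yours being the heavier tool. For the second assertion the paper reasons about generating triples of atoms: a linearly independent, pairwise non-orthogonal triple of lines generates an infinite subOML (the M\"obius density argument), and in the only remaining nontrivial case ($a_1$ orthogonal to $a_2\vee a_3$) the subOML is two-generated, hence an epimorphic image of the $96$-element free OML on two generators, which is set-representable, and set-representability passes to epimorphic images. You instead prove a stronger structural statement --- every finite subOML is $2^k$ or $\mathbf 2\times MO_n$ --- and your derivation (at most one plane-atom; a plane-atom forces $K\cong 2^2$ via $q^\perp\wedge p$; otherwise all atoms are lines confined to one plane $\pi$ or equal to $\pi^\perp$, making $\pi$ central) is sound. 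Both routes rest on the same hard geometric lemma, which you rightly isolate as the crux; note that your formulation (covering also triples with exactly one orthogonal pair) needs the small extra step of manufacturing a general-position triple from such a configuration, e.g.\ by replacing $\ell_3$ with $\ell_1^\perp\wedge(\ell_1\vee\ell_3)$, before the M\"obius argument applies, and your ``von Neumann coordinatization'' sketch for the lemma is too vague to count as a proof. Two further remarks: $MO_n$ with $n\ge 2$ never actually occurs as a subOML of $L(\R^3)$ (the join of two line-atoms is a plane, not $1$), so your list collapses to $2^k$ and $\mathbf 2\times MO_n$; and your alternative of quoting the classification of finite modular ortholattices as products of Boolean algebras and $MO_n$'s is legitimate (it ultimately rests on Baer's theorem that every polarity of a finite projective plane has absolute points) and would bypass the geometric lemma entirely --- arguably cleaner than either your main route or the paper's.
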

\begin{proof}
It is well known (see~\cite{KochenSpecker}, for a stronger version see~\cite{VoracekNavara}) that $\mathcal{S}_2(L(\R^3)) = \emptyset$. On the other hand, if $K$ is a subOML of $L(\R^3)$ and $K$ contains $3$ atoms (three one-dimensional subspaces) that are in a general configuration, meaning they are linearly independent and no two atoms are orthogonal, then $K$ is infinite. This can be easily calculated (or obtained as a consequence of the classic result by A.F. M{\"o}bius - $K$ has so many directions that their normalizations form a dense subset of the unit sphere in $\R^3$). The only essentially different situation to be argued is the case when $K$ is generated by $3$ atoms $a_1$, $a_2$ and $a_3$ such that $a_1$ is orthogonal to the plane $a_2 \lor a_3$.The OML $K$ is then generated by $a_2$ and $a_3$. Then $K$ can be viewed as an epimorphic image of the free OML over two generators. The latter free OML is known to be set-representable and it consists of 96 elements (see e.g.~\cite{Kalmbach} and~\cite{Harding}).
Since any epimorphic image of a set-representable OML is again set-representable (see e.g.~\cite{Mayet}), $K$ is set-representable and therefore $K$ contains an order determining set of two-valued states (see e.g.~\cite{Ptak2}). This concludes the proof.
\end{proof}

It may be worth making a short comment, as a fact of a methodological matter, that the local finiteness approach allows one to have less standard proofs of the following classic results for Boolean algebras.
\begin{theorem}\label{Theorem 1.4}
Let $B$ a Boolean algebra.\\
(i) If $A$ is a Boolean subalgebra of $B$ and $s \in \mathcal{S}(A)$, then $s$ can be extended to $B$ as a state.\\
(ii) If $a \neq 0$, then there is such a prime ideal $I$ on $B$ that $a \in I$.
\end{theorem}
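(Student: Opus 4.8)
The plan is to prove both statements by the ``let the compactness do it'' device already used for Theorem~\ref{Theorem 1.2}, exploiting that every Boolean algebra lies in $\mathcal{LF}$ (Theorem~\ref{Theorem 1.1}(i)); this replaces the customary Zorn's lemma arguments by a Tychonoff argument over the directed system of finite subalgebras of $B$.

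For (i) I would work in the compact space $[0,1]^B$ with the topology of pointwise convergence. For each finite subalgebra $K$ of $B$ let $\mathcal{T}(K)$ be the set of all $g : B \to [0,1]$ whose restriction to $K$ is a state on $K$ and which agrees with $s$ on $A \cap K$. Three points need checking. First, $\mathcal{T}(K) \neq \emptyset$: here $A \cap K$ is a finite subalgebra of the finite Boolean algebra $K$, hence is generated by a partition of the atoms of $K$; a state on $A \cap K$ prescribes the total mass carried by each block of that partition, and one extends it to $K$ by spreading that mass arbitrarily over the atoms inside the block. Second, each $\mathcal{T}(K)$ is closed in $[0,1]^B$, being cut out by the conditions $g(1)=1$, $g(x\vee y)=g(x)+g(y)$ for the finitely many orthogonal pairs of $K$, and $g(x)=s(x)$ for $x\in A\cap K$. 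Third, $\{\mathcal{T}(K)\}$ is a centered family: given $K_1,\dots,K_m$, the subalgebra $K$ generated by $K_1\cup\dots\cup K_m$ is again finite by local finiteness, and $\mathcal{T}(K)\subseteq\bigcap_j\mathcal{T}(K_j)$ since restrictions of states are states and $A\cap K_j\subseteq A\cap K$. Compactness of $[0,1]^B$ then yields a point $g\in\bigcap_K\mathcal{T}(K)$; since every orthogonal pair of $B$ and every element of $A$ lies in some finite subalgebra of $B$, such a $g$ is a state on $B$ extending $s$.

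For (ii) I would run the same argument in $\{0,1\}^B$. Recall that the prime (equivalently maximal) ideals of $B$ correspond bijectively to the two-valued states $s$ on $B$ via $I=s^{-1}(0)$, so it suffices to produce, for $a\neq 0$, a two-valued state $s$ with $s(a)=1$. For a finite subalgebra $K$ of $B$ containing $a$ one simply picks an atom $p\le a$ of $K$ --- this is where $a\neq 0$ enters --- and takes the two-valued state on $K$ concentrated at $p$; this shows that the sets $\mathcal{V}(K)=\{g\in\{0,1\}^B : g|_K\in\mathcal{S}_2(K),\ g(a)=1\}$ are non-empty, and closedness and centeredness are verified exactly as before, so compactness of $\{0,1\}^B$ produces the desired two-valued state.

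The only step that is not pure bookkeeping is the finite building block of (i) --- the observation that a state on a subalgebra of a finite Boolean algebra always extends to the whole algebra; but this is immediate from the atom description of finite Boolean algebras, so I do not anticipate any real obstacle.
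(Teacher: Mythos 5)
Your overall strategy is the paper's own: both parts are obtained by the same Tychonoff compactness argument over the directed family of finite Boolean subalgebras, so in spirit this is the same proof. In part (i) your setup differs slightly, and arguably improves on the paper's: the paper indexes by finite subalgebras $K$ of $A$ and takes $\mathcal{U}(K)$ to consist of genuine states on all of $B$ that agree with $s$ on $K$, so its non-emptiness step already requires extending a state from a finite subalgebra to the whole of $B$ (merely asserted to be easy); you instead index by finite subalgebras $K$ of $B$ and only require your functions to be states on $K$ agreeing with $s$ on $A\cap K$, so the non-emptiness step reduces to the purely finite atom-splitting argument and all the infinitary content sits in the single application of compactness. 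That part is correct and complete.

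In part (ii) there is a sign discrepancy you should fix. The stated conclusion is $a\in I$, which under the correspondence $I=s^{-1}(0)$ means you need a two-valued state with $s(a)=0$; your construction concentrates the state at an atom $p\le a$ and yields $s(a)=1$, hence a prime ideal with $a\notin I$ --- the ultrafilter form of Stone's theorem, a correct statement but not the one asserted. The paper's proof chooses the states on the finite subalgebras with $\tilde{s}(a)=0$ and then sets $I=s^{-1}(0)$, which requires an atom below $a'$ and hence the hypothesis $a\neq 1$ rather than $a\neq 0$ (for $a=1$ no proper ideal contains $a$, so the theorem's hypothesis is evidently a slip). You silently repaired the statement by changing the conclusion to match the hypothesis, where the paper repairs it by (implicitly) changing the hypothesis to match the conclusion; either is a one-line fix, but as written your argument does not deliver the conclusion that is actually stated.
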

\begin{proof}
(i) (See e.g.~\cite{HornTarski}, among other proofs the Krein-Millman theorem provides an apparently new proof strategy, see~\cite{BuresovaPtak}). Let $s \in \mathcal{S}(A)$. For each finite Boolean subalgebra $K$ of $A$ it is easy to construct a state $t: B \to$ $[0,1]$ that agrees with $s$ on $K$. So the (compact) set $\mathcal{U}(K)= \{t \in \mathcal{S}(B),\, t=s \text{ on } K\}$ is non-void. If we denote by $\mathcal{K}$ the set of all finite Boolean subalgebras of $A$, we see that $\bigcap\limits_{K \in \mathcal{K}}\mathcal{U}(K)$ is non-empty. Take $v \in \bigcap\limits_{K \in \mathcal{K}}\mathcal{U}(K)$. Then $v \in \mathcal{S}(B)$ and $v$ extends $s$.

(ii) (The classical Stone Theorem).~Consider the collection $\mathcal{G}$ of all finite Boolean subalgebras of $B$ that contain the element $a$. If $K \in \mathcal{G}$, then there is a two-valued state $\tilde{s} \in \mathcal{S}_2(K)$ such that $\tilde{s}(a)=0$. By the compactness argument used above, there is a state $s \in \mathcal{S}_2(B)$ with $s(a)=0$. Then $I=\{ a \in B |s(a) = 0\}$ is the required ideal.

\end{proof}

In the conclusion, let us ask if $\mathcal{LF}$ contains all set-representable OMLs. Of course, the free OML over three generators is not set-representable. In ~\cite{Harding} the author shows that this OML contains the free OML over countably many generators and if the latter OML were set-representable, then so would be all ``small" OMLs. This is obviously not the case (see~\cite{Greechie}).
However, we can answer the above formulated question to the negative in other ways contributing thus to our theme and adding to \textit{Theorem 1.1}. The first way utilizes the so-called Kalmbach embedding of lattices. Recall that by~\cite{Kalmbach} each lattice $L$ can be lattice-theoretically embedded into an OML. Let us denote it by $K(L)$. It can be shown (see~\cite{HardingNavara}) that $K(L)$ is always set-representable. 
Hence if $L$ is not locally finite as a lattice, and such a lattice is easy to find, then $K(L)$  is a set-representable OML that is not locally finite. Another example related to \textit{Theorem~\ref{Theorem 1.1}(ii)} could be obtained by showing that the class $\mathcal{LF}$ is not closed under the formation of the product of countably many (finite set-representable) OMLs. Indeed, it suffices to find finite set-representable OMLs $L_n$, $n \in \N$, such that $L_n$ is generated by three elements $a_1^n,a_2^n$ and $a_3^n$ and, moreover, the cardinality of $L_n$ is greater than or equal to $n$ (one can for instance use the Kalmbach embedding again). Let us take the Cartesian product $\prod\limits_{n \in \N}L_n$. Then the triple $b_i \in \prod\limits_{n \in \N}L_n,\, i \leq 3$, defined by $b_1=(a_1^n),\ b_2=(a_2^n),\ b_3=(a_3^n),\, n \in \N$, generates an infinite subset of $\prod\limits_{n \in \N}L_n$ (the $n$-th coordinate ``supplies" at least $n$ elements). Since the product of set-representable OMLs is again set-representable (see e.g.~\cite{Mayet}), we see that the product $\prod\limits_{n \in \N}L_n$ is the example we looked for.\acr
In the final note, let us observe that the analogous questions on the local finiteness are pertinent for the recently studied orthomodular structures endowed with an intrinsic symmetric difference. The lattices of this class are denoted by ODLs (the OMLs with an ``XOR" operation;  see~\cite{BikchentaevNavara},~\cite{BuresovaPtak},~\cite{Matousek},~\cite{MatousekPtak},~\cite{Su}, etc.). The results similar to our findings of \textit{Theorem~\ref{Theorem 1.1}} and \textit{Theorem~\ref{Theorem 1.2}} can be also  proved for ODLs. A remarkable result related to the previous paragraph has already been obtained in~\cite{Su}: There is an infinite set-representable ODL that is generated by finitely many elements. So the ODLs do not have to be locally finite even for the set-representable ones.

\end{document}